\title{Error bound for the asymptotic expansion of the Hartman-Watson integral}
\author{Dan Pirjol}
\date{Stevens Institute of Technology\\
Hoboken, NJ 07030\\
\today}
\newtheorem{proposition}{Proposition}
\newtheorem{remark}{Remark}
\begin{document}
\maketitle

\begin{abstract}
This note gives a bound on the error of the leading term in the $t\to 0$ asymptotic expansion 
of the Hartman-Watson distribution $\theta(r,t)$ in the regime $rt=\rho$ constant.
The leading order term has the form $\theta(\rho/t,t)=\frac{1}{2\pi t}e^{-\frac{1}{t} (F(\rho)-\pi^2/2)} G(\rho) (1 + \vartheta(t,\rho))$, where the error term is bounded uniformly 
over $\rho$ as 
$|\vartheta(t,\rho)|\leq \frac{1}{70}t$.
\end{abstract}

\section{Introduction}

The Hartman-Watson distribution \cite{Hartman} appears in several problems of applied probability and financial engineering. Most notably this distribution determines the joint distribution of the
time-integral of a geometric Brownian motion and its terminal value \cite{Yor}. 
The precise numerical evaluation of this distribution is of interest for many applications, see for example \cite{Aimi,Barrieu,Buonaguidi}.
 
This distribution is expressed in terms of the Hartman-Watson integral $\theta(r,t)$, defined as
\begin{equation}\label{thetadef}
\theta(r,t) = \frac{r}{\sqrt{2\pi^3 t}} e^{\frac{\pi^2}{2t}} \int_0^\infty e^{-\frac{1}{2t} \xi^2 - r \cosh \xi} \sinh \xi \sin \frac{\pi \xi}{t} d\xi \,.
\end{equation}
The numerical evaluation of this integral for small $t\ll 1$ requires very high accuracy in intermediate steps, due to the fast oscillating factor in the integrand $\sin(\frac{\pi \xi}{t})$ and to the smallness of the integral which is multiplied with the large exponential factor \cite{Barrieu,Boyle}. For this reason, the use of analytical expansions for numerical evaluation in this regime has been proposed as a more convenient alternative \cite{Barrieu,Gerhold,HWpaper}. 

An asymptotic expansion of this integral was proposed in \cite{HWpaper} in the limit $t\to 0$ at fixed $\rho=r t$. Proposition 1 in \cite{HWpaper} gives this expansion as
\begin{equation}\label{HWexp}
\theta(\rho/t,t) = \frac{1}{2\pi t} e^{-\frac{1}{t} (F(\rho) - \frac{\pi^2}{2})}
\Big( G(\rho) + G_1(\rho) t + O(t^2)\Big)\,,\quad (t\to 0)
\end{equation}
where the functions $F(\rho),G(\rho), G_1(\rho)$ are known in closed form. 

A simple approximation for $\theta(\rho/t,t)$ is obtained by truncating the expansion \eqref{HWexp} to the first term, and can be written as
\begin{equation}\label{varthetadef}
\theta(\rho/t,t) = \frac{1}{2\pi t} e^{-\frac{1}{t} (F(\rho) - \frac{\pi^2}{2})}
G(\rho)(1 + \vartheta(t,\rho) )
\end{equation}
with $\vartheta(t,\rho)$ an error term. This has been used for numerical pricing of Asian options in \cite{JCAM} and for deriving subleading corrections to the short-maturity asymptotics of Asian options in the Black-Scholes model \cite{subleading}. 
(The leading order term follows from Large Deviations theory and was computed in \cite{PZAsian}.) The exponential factor in \eqref{varthetadef} determines the short maturity asymptotics of European and VIX options in local-stochastic volatility models with 
geometric Brownian motion stochastic volatility \cite{VIXpaper}.

Using a combination of analytical and numerical estimates for the integrand appearing in the asymptotic expansion we give in this note an upper bound on the error term
\begin{equation}
|\vartheta(t,\rho) | \leq \frac{1}{70} t \,.
\end{equation}
This bound is the main result of this note. In Remark \ref{rmk:strongerbound} we give 
also an improved error bound, which remains bounded as $t\to \infty$.

\section{Saddle point expansion for $\theta(r,t)$}
\label{sec:2}

We summarize in this section a few steps in the derivation of the asymptotic expansion (\ref{HWexp}) which will be required for the proof of the error bound. 
The asymptotic expansion (\ref{HWexp}) is obtained by 
expressing the integral in (\ref{thetadef}) with $r=\rho/t$ in terms of the integral
\begin{equation}\label{Idef}
I(\rho,t) := \int_{-\infty}^\infty e^{-\frac{1}{t} h(\xi)} \sinh \xi d\xi
\end{equation}
with
\begin{equation}\label{hdef}
h(\xi ) = \frac12\xi^2 + \rho \cosh \xi - i\pi \xi
\end{equation}
The asymptotics of $I(\rho,t)$ as $t\to 0$ of this integral can be computed using the
saddle point method, see for example 
Sec.~4.6 in Erd\'elyi \cite{Erdelyi} and Sec.~4.7 of Olver \cite{Olver}.

For the application of this method, the integration contour in (\ref{Idef}) is deformed from the real axis such that it runs through appropriate saddle points of $h(\xi)$ and along steepest descent paths, along which $\Im h(\xi) = 0$. The position of the saddle points and the choice of
the integration contours depend on $\rho$, as follows.

i) For $0<\rho<1$ the integration contour is shown in the left plot of Fig.~\ref{Fig:contours}.
It passes through the saddle points at $B: \xi_B=-x_1+i\pi$ and $A: \xi_A=x_1 + i \pi$
where $x_1$ is the solution of the equation
\begin{equation}\label{eqx1}
\rho \frac{\sinh x_1}{x_1} = 1
\end{equation}

ii) For $\rho > 1$ the integration contour runs as in the middle plot in Fig.~\ref{Fig:contours},
and passes through the saddle point $S$ at $\xi_S =i y_1$,
where $y_1$ is the solution of the equation 
\begin{equation}\label{eqy1}
y_1 + \rho\sin y_1 = \pi\,.
\end{equation}

iii) $\rho=1$. The integration contour is shown in the right plot of Fig.~\ref{Fig:contours}.
This passes through the fourth order\footnote{The first non-zero derivative
of $h(\xi)$ at this point is the fourth order derivative.} saddle point at $S: \xi_S = i\pi$. 
\vspace{0.2cm}

\begin{figure}
    \centering
   \includegraphics[width=1.5in]{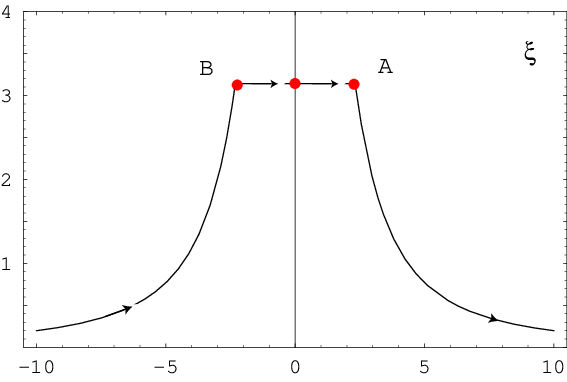}
   \includegraphics[width=1.5in]{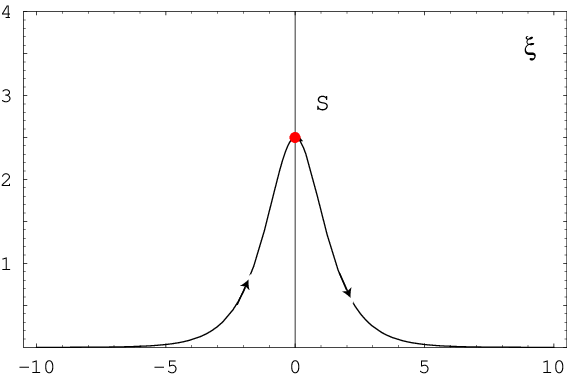}
   \includegraphics[width=1.5in]{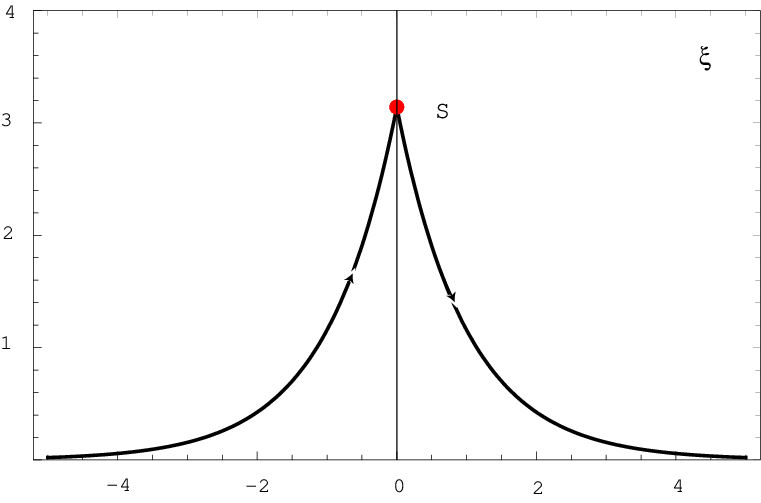}
    \caption{ Integration contours for $I(\rho,t)$ in the $\xi$ complex plane
for the application of the asymptotic expansion. 
The red dots show the saddle points. 
Left: contour for $0<\rho<1$. The contour passes through the saddle points 
$B(\xi=-x_1+i\pi)$ and $A(\xi=x_1+i\pi)$.
Middle: contour for $\rho>1$. 
The contour passes through the saddle point $S(\xi=i y_1)$.
Right: the contour for $\rho=1$, passing through the saddle point $S(\xi=i\pi)$.}
\label{Fig:contours}
 \end{figure}

For all cases, the contour integrals giving the Hartman-Watson integral can be expressed as the imaginary part of an integral
\begin{equation}\label{HWint}
\theta(\rho/t,t) = \frac{\rho}{\sqrt{2\pi^3 t^3}} e^{\frac{\pi^2}{2t}} e^{-\frac{1}{t} h(X)}
\Im \int_0^\infty e^{-\frac{1}{t}\tau} g(\xi(\tau),\rho) d\tau
\end{equation}
where $X$ is one of the saddle points, distinct for each case: 
i) $X=A$ for $0< \rho<1$, ii) $X=S$ for $\rho>1$ and iii) $X=S$ for $\rho=1$.
The function $g(\xi(\tau),\rho)$ is 
\begin{equation}
g(\xi,\rho) = \frac{\sinh\xi}{\xi + \rho \sinh\xi - i\pi} 
\end{equation}
taken along the steepest descent path $\xi(\tau): [X,\infty)$ starting at the saddle point $X$ and extending to $+\infty$. The real variable $\tau$ along the path is defined by $\tau = h(\xi) - h(X)$ where $h(\xi)$ is defined in (\ref{hdef}).
We will denote for simplicity $g(\tau,\rho) := g(\xi(\tau ),\rho)$.

The integrand is expanded as
\begin{equation}\label{imgexp}
\Im  g(\tau,\rho) = g_0(\rho) \frac{1}{\sqrt{\tau}} + g_2(\rho) \sqrt{\tau} + O(\tau^{3/2})
\end{equation}
The coefficient $g_0(\rho)$ is given explicitly as follows.
\begin{equation}
g_0(\rho) = \left\{
\begin{array}{cc}
\frac{\sinh x_1}{\sqrt{2(\rho \cosh x_1 - 1)}} & \,, 0 < \rho < 1 \\
\sqrt{\frac32} & \,, \rho = 1 \\
\frac{\sin y_1}{\sqrt{2(\rho \cos y_1 + 1)}} & \,, \rho > 0 
\end{array}
\right.
\end{equation}
where $x_1$ is the solution of the equation $\rho \frac{\sinh x_1}{x_1} = 1$ and $y_1$ is the solution of the equation $y_1 + \rho \sin y_1 = \pi$.

Substituting \eqref{imgexp} into the integral (\ref{HWint}) and integrating term by term by Watson's lemma
gives
\begin{equation}
\theta(\rho/t,t) = \frac{\rho}{\sqrt2 \pi t} e^{-\frac{1}{t}(F(\rho) - \frac{\pi^2}{2}) }
\Big(g_0(\rho) +
\frac12 t g_2(\rho) + O(t^2) \Big)
\end{equation}
The leading term has the form shown in Proposition 1 of \cite{HWpaper} by identifying
$G(\rho) = \sqrt2\rho g_0(\rho)$.


\section{Error bound}
\label{sec:3}

We study here the error introduced by keeping only the leading order term in the expansion 
\eqref{imgexp} of $\Im g(\tau,\rho)$ in the integral in (\ref{HWint}). 
The integral can be written as
\begin{align}\label{intexp}
\Im \int_0^\infty e^{-\tau/t} g(\tau) d\tau &= 
\sqrt{\pi t} \, g_0(\rho) ( 1 + \vartheta(t,\rho) )
\end{align}
where $\vartheta(t,\rho)$ is an error term. Substituting into \eqref{HWint} this yields the representation \eqref{varthetadef} of the function $\theta(\rho/t,t)$.

Define the error of the leading order term in the expansion (\ref{imgexp}) in terms of a function $\delta(\tau,\rho)$
\begin{equation}\label{deltadef}
\Im g(\tau,\rho) =  g_0(\rho) \frac{1}{\sqrt{\tau}} ( 1 + \delta(\tau,\rho))\,.
\end{equation}

The exact results $\rho=1$ presented in the next section
and numerical tests for general $\rho > 0$ in Sec.~\ref{sec:num} suggest 
that $\delta(\tau,\rho)$ is bounded as
\begin{equation}\label{delbound}
|\delta(\tau,\rho) | < \frac{1}{35} \tau\,,\quad \tau > 0
\end{equation}
uniformly over $\rho$.

This bound can be used to derive an upper bound on the error function $\vartheta(t,\rho)$
defined in (\ref{intexp}).

\begin{proposition}\label{prop:main}
Assume that the bound (\ref{delbound}) holds. 
Then the error $\vartheta(t,\rho)$ in (\ref{intexp}) is bounded from above as
\begin{equation}\label{varthetabound}
|\vartheta(t,\rho)| \leq  \frac{1}{70} t\,.
\end{equation}
\end{proposition}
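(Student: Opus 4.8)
The plan is to start from the definition \eqref{intexp} and insert the expansion \eqref{deltadef}. Writing $\Im g(\tau,\rho) = g_0(\rho)\tau^{-1/2}(1+\delta(\tau,\rho))$ and using the standard Gamma integral $\int_0^\infty e^{-\tau/t}\tau^{-1/2}d\tau = \sqrt{\pi t}$, we get
\begin{equation}
\Im\int_0^\infty e^{-\tau/t} g(\tau,\rho)\,d\tau = g_0(\rho)\Big(\sqrt{\pi t} + \int_0^\infty e^{-\tau/t}\tau^{-1/2}\delta(\tau,\rho)\,d\tau\Big)\,,
\end{equation}
so that comparing with \eqref{intexp} identifies the error term exactly as
\begin{equation}
\vartheta(t,\rho) = \frac{1}{\sqrt{\pi t}}\int_0^\infty e^{-\tau/t}\,\tau^{-1/2}\,\delta(\tau,\rho)\,d\tau\,.
\end{equation}

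Next I would bound the absolute value by pulling the modulus inside the integral and invoking the hypothesis \eqref{delbound}, $|\delta(\tau,\rho)| < \frac{1}{35}\tau$, uniformly in $\rho$. This gives
\begin{equation}
|\vartheta(t,\rho)| \leq \frac{1}{\sqrt{\pi t}}\int_0^\infty e^{-\tau/t}\,\tau^{-1/2}\,\frac{1}{35}\tau\,d\tau = \frac{1}{35\sqrt{\pi t}}\int_0^\infty e^{-\tau/t}\,\tau^{1/2}\,d\tau\,.
\end{equation}
The remaining integral is again elementary: $\int_0^\infty e^{-\tau/t}\tau^{1/2}d\tau = t^{3/2}\Gamma(3/2) = \frac{\sqrt{\pi}}{2}t^{3/2}$. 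Substituting yields $|\vartheta(t,\rho)| \leq \frac{1}{35\sqrt{\pi t}}\cdot\frac{\sqrt{\pi}}{2}t^{3/2} = \frac{1}{70}t$, which is exactly \eqref{varthetabound}.

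The argument is almost entirely bookkeeping once the representation of $\vartheta$ as a Gamma-weighted average of $\delta$ is in hand, so there is no serious obstacle inside this proposition itself; the factor $\frac{1}{70}=\frac{1}{35}\cdot\Gamma(3/2)/\Gamma(1/2)$ comes out cleanly because $\Gamma(3/2)/\Gamma(1/2)=\tfrac12$. The only point needing a word of care is the interchange of $\Im$ and the $\tau$-integral and the legitimacy of splitting the integral into the $\tau^{-1/2}$ piece and the $\delta$ remainder: both are justified because $g_0(\rho)\tau^{-1/2}$ and $e^{-\tau/t}\tau^{-1/2}|\delta|\le \frac{1}{35}e^{-\tau/t}\tau^{1/2}$ are absolutely integrable on $(0,\infty)$, the latter precisely by the assumed bound \eqref{delbound}, so dominated convergence / Fubini applies and $\Im$ commutes with the (real, absolutely convergent) integral. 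The genuinely hard content — establishing \eqref{delbound} itself — is assumed here and is handled separately via the exact $\rho=1$ computation and the numerical estimates of the later sections.
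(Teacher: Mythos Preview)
Your argument is correct and is essentially the same as the paper's: both identify the error with the $\delta$-remainder, apply the triangle inequality together with the assumed bound $|\delta(\tau,\rho)|\le \frac{1}{35}\tau$, and evaluate the resulting Gamma integral $\int_0^\infty e^{-\tau/t}\sqrt{\tau}\,d\tau=\tfrac{\sqrt{\pi}}{2}t^{3/2}$ to obtain $\frac{1}{70}t$. Your version is slightly more explicit in isolating $\vartheta$ as a normalized integral of $\delta$ and in justifying the splitting via absolute integrability, but the substance is identical.
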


\begin{proof}

We have
\begin{align}
&\Big|\int_0^\infty e^{-\tau/t} ( \Im  g(\tau) - g_0(\rho)\frac{1}{\sqrt{\tau}}) 
d\tau \Big| \leq 
\int_0^\infty e^{-\tau/t} | \Im  g(\tau) - g_0(\rho)\frac{1}{\sqrt{\tau}} | 
d\tau  \\
&\leq 
\int_0^\infty e^{-\tau/t} g_0(\rho) |\delta(\tau,\rho) | \frac{d\tau}{\sqrt{\tau}} 
\leq \frac{1}{35}
\int_0^\infty e^{-\tau/t} g_0(\rho) \sqrt{\tau} d\tau \nonumber \\
&= \frac{1}{70} t g_0(\rho) \sqrt{\pi t} \,.
\nonumber
\end{align}
where we used the bound (\ref{delbound}) in the last step. This is equivalent with the bound
(\ref{varthetabound}) on $|\vartheta(t,\rho)|$.
\end{proof}

\begin{figure}
    \centering
 \includegraphics[width=3.5in]{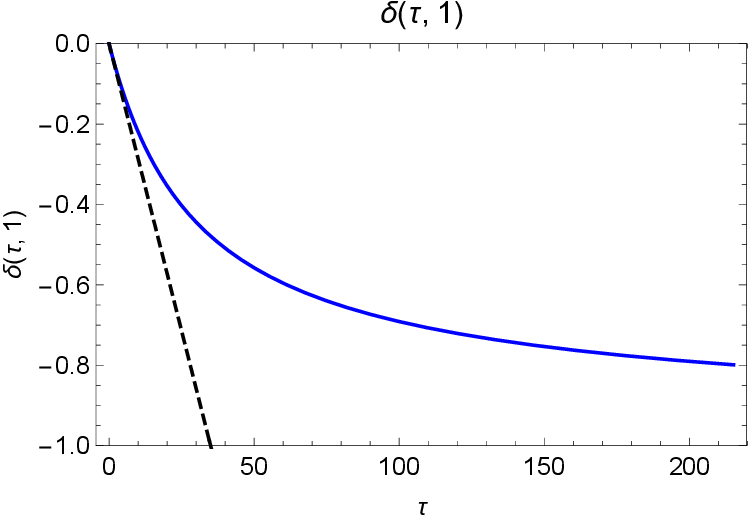}
    \caption{ Plot of $\delta(\tau,1)$ for $\rho=1$. The dashed line is $-\frac{1}{35} \tau$. }
\label{Fig:delta:rho1}
 \end{figure}
 
 \subsection{Some exact results for $\rho=1$}
 
We give here a few exact results about the function $\delta(\tau,1)$. 

 \begin{proposition}
 We have
 \begin{align}
 \lim_{\tau\to 0} \delta(\tau,1) &= 0 \\
\lim_{\tau\to 0} \delta'(\tau,1) &= -\frac{1}{35} \\
\lim_{\tau\to 0} \delta''(\tau,1) &= \frac{7}{4,125} \,.
\end{align}
\end{proposition}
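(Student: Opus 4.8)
The plan is to compute a local expansion of $\Im g(\tau,1)$ about $\tau=0$ to the order needed, using the steepest-descent parametrization $\tau = h(\xi)-h(i\pi)$ near the fourth-order saddle $\xi_S = i\pi$. Since the first non-vanishing derivative of $h$ at $\xi_S$ is $h^{(4)}(i\pi)$, I would first write $\xi = i\pi + \eta$ and Taylor-expand $h(i\pi+\eta)-h(i\pi)$ in powers of $\eta$, using $h(\xi) = \tfrac12\xi^2 + \cosh\xi - i\pi\xi$ at $\rho=1$; the quadratic and cubic terms cancel (that is the content of it being a fourth-order saddle), so $\tau = c_4\eta^4 + c_5\eta^5 + c_6\eta^6 + \cdots$ with $c_4 = h^{(4)}(i\pi)/4!$, etc. Concretely $\cosh(i\pi+\eta) = -\cosh\eta$, so $h(i\pi+\eta)-h(i\pi) = \tfrac12\eta^2 + i\pi\eta - \cosh\eta + 1 - i\pi\eta = \tfrac12\eta^2 + 1 - \cosh\eta$, i.e. $\tau = -\tfrac{1}{24}\eta^4 - \tfrac{1}{720}\eta^6 - \cdots$. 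To get a real positive $\tau$ along the descent path I would take the appropriate root $\eta = e^{i\phi}(24\tau)^{1/4}(1 + a_1\tau^{1/2} + a_2\tau + \cdots)$, invert the series for $\tau(\eta)$, and keep enough terms.

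Next I would expand $g(\xi,1) = \dfrac{\sinh\xi}{\xi + \sinh\xi - i\pi}$ in powers of $\eta$ about $\xi = i\pi$. Here $\sinh(i\pi+\eta) = -\sinh\eta$, so $g = \dfrac{-\sinh\eta}{i\pi + \eta - \sinh\eta - i\pi} = \dfrac{-\sinh\eta}{\eta - \sinh\eta} = \dfrac{\sinh\eta}{\sinh\eta - \eta}$. Since $\sinh\eta - \eta = \tfrac{1}{6}\eta^3 + \tfrac{1}{120}\eta^5 + \cdots$ and $\sinh\eta = \eta + \tfrac16\eta^3 + \cdots$, this gives $g = \dfrac{6}{\eta^2}\bigl(1 + O(\eta^2)\bigr)$, a Laurent series in $\eta^2$ with real coefficients. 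Substituting the series $\eta^2 = e^{2i\phi}\sqrt{24\tau}\,(1 + \cdots)$ produces $g(\tau,1) = g_0(1)\tau^{-1/2}(1 + \delta(\tau,1))$ with $g_0(1) = \sqrt{3/2}$ as quoted, and expanding the bracket to second order in $\tau$ yields $\delta(\tau,1) = d_1\tau + d_2\tau^2 + \cdots$. The three claimed limits are then $\delta(0,1)=0$, $\delta'(0,1) = d_1$, and $\delta''(0,1) = 2 d_2$, which should come out to $d_1 = -\tfrac{1}{35}$ and $d_2 = \tfrac{7}{8250}$ so that $2d_2 = \tfrac{7}{4125}$.

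The main obstacle is purely computational: carrying the double series inversion (first $\tau(\eta)\mapsto \eta(\tau)$, then substituting into the Laurent expansion of $g$) far enough and tracking the phase factor $e^{i\phi}$ correctly so that $\Im g$ rather than $g$ is extracted along the genuine steepest-descent ray. One must keep terms through $\eta^{8}$ in $\tau(\eta)$ and through $\eta^{4}$ in the numerator/denominator of $g$ to resolve the $\tau^2$ coefficient, and a single dropped term changes $d_2$. I would organize this by setting $u = \eta^2$, writing $\tau = -\tfrac{1}{24}u^2 - \tfrac{1}{720}u^3 - \tfrac{1}{40320}u^4 - \cdots$, inverting to get $u$ as a series in $\tau^{1/2}$, and substituting into $g = 6u^{-1}(1 + \tfrac{1}{?}u + \cdots)$; the reality of all coefficients in $u$ makes the imaginary part come entirely from the branch choice $u \propto \pm i\sqrt{\tau}$, which fixes signs. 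Finally I would note that $\delta(\tau,1)$ is smooth at $\tau=0$ (the $\tau^{1/2}$ powers cancel), so the one-sided limits are genuine derivatives, completing the proof.
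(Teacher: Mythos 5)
Your proposal follows essentially the same route as the paper's proof: shift to $\zeta=\xi-i\pi$ so that $\tau=\tfrac12\zeta^2-\cosh\zeta+1$ and $g=\sinh\zeta/(\sinh\zeta-\zeta)$, invert the Puiseux series for $\zeta^2(\tau)$ in powers of $\sqrt{-\tau}$, substitute into the (real-coefficient) Laurent series of $g$ in $\zeta^2$, and select the fourth-quadrant branch $\sqrt{-\tau}=-i\sqrt{\tau}$ to extract $\Im g$ and hence $\delta(\tau,1)=-\tfrac{1}{35}\tau+\tfrac{7}{8250}\tau^2+O(\tau^3)$; your observation that only the branch choice produces the imaginary part, and that $\delta$ is smooth in integer powers of $\tau$, is exactly the structure the paper exploits. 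The one concrete slip is your accounting of how far the expansions must be carried: with $u=\zeta^2\sim\sqrt{\tau}$, the $\tau^{3/2}$ term of $\Im g$ (which controls $\delta''(0,1)$) receives contributions from the relative expansion of $g=\tfrac{6}{u}(1+c_1u+c_2u^2+c_3u^3+c_4u^4+\cdots)$ up through $c_4$, and from the inversion $u(\sqrt{-\tau})$ through relative order $(-\tau)^2$; keeping only ``$\eta^4$ in the numerator/denominator'' and ``$\eta^8$ in $\tau(\eta)$'' would drop terms that change $d_2$ (and even $d_1$). This is a bookkeeping undercount rather than a conceptual gap, and is repaired by carrying both series a few orders further before truncating.
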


This shows that for sufficiently small $\tau$, the function $\delta(\tau,1)$ is decreasing and convex, and its slope is bounded in absolute value by $1/35$. These features of 
$\delta(\tau,1)$ are observed in 
Figure \ref{Fig:delta:rho1} which shows the numerical evaluation of this function.

\begin{proof}
Taking $\rho=1$ we have
\begin{equation}
\tau = h(\xi) - h(i\pi) = \frac12 \xi^2 + \cosh \xi - i\pi \xi - \frac{\pi^2}{2}-1\,.
\end{equation}
This gives an equation for $\xi$ along the steepest descent path, which can be solved to find 
$\xi(\tau)$.
The form of this equation simplifies by introducing $\zeta = \xi - i\pi$, the distance from a point on the path to the saddle point at $i\pi$. Expressed in terms of $\zeta$, we have
\begin{equation}\label{zetaeq}
\tau = \frac12 \zeta^2 - \cosh \zeta + 1
\end{equation}
and 
\begin{equation}\label{gzeta}
g(\zeta,1) = \frac{\sinh \zeta}{\sinh \zeta - \zeta}\,.
\end{equation}
We denoted for simplicity $g(\zeta,1) = g(\xi(\zeta),1)$.

The equation (\ref{zetaeq}) for $\zeta$ can be solved in a series expansion in $\tau$. 
Substituting into $g(\zeta,1)$ given in (\ref{gzeta}) and expanding in $\tau$ gives an explicit expansion for $\delta(\tau,1)$. The first two terms in this expansion are 
\begin{equation}\label{deltasol}
\delta(\tau,1) = - \frac{1}{35} \tau + \frac{7}{8,250} \tau^2 + O(\tau^3)
\end{equation}
The stated results follow immediately from the coefficients of this expansion.

For completeness we give a few steps in the derivation of (\ref{deltasol}).
Inversion of (\ref{zetaeq}) around $\tau=0$ gives
\begin{equation}
\zeta^2(\tau) = 2\sqrt6 \sqrt{-\tau} + \frac25 \tau + \frac{2}{35} \sqrt{\frac23} (-\tau)^{3/2} + O(\tau^2)\,.
\end{equation}
Substituting into $g(\xi)$ given in (\ref{gzeta}) and expanding in $\tau$ gives
\begin{equation}
g(\tau) = \sqrt{\frac32} \frac{1}{\sqrt{-\tau}} +\frac45 + \frac{1}{35} \sqrt{\frac32} \sqrt{-\tau} + O(\tau^{3/2})\,.
\end{equation}
We are interested in the solution of (\ref{zetaeq}) corresponding to $\zeta$ in the fourth quadrant. 
This is obtained by taking $\sqrt{-\tau} = -i \sqrt{\tau}$, which gives
\begin{equation}\label{img2}
\Im g(\tau,1) = \sqrt{\frac32} \frac{1}{\sqrt{\tau}} - \frac{1}{35} \sqrt{\frac32} \sqrt{\tau} 
+ \frac{7}{2,750 \sqrt6} \tau^{\frac32} + O(\tau^{5/2})
\end{equation}
Finally we have
\begin{equation}
\delta(\tau,1) =-1 + \sqrt{\frac23} \sqrt{\tau} \Im  g(\tau,1) 
\end{equation}
Substituting here \eqref{img2} gives the series (\ref{deltasol}).

\end{proof}

Numerical study of the series \eqref{img2} to higher orders suggests that this is an alternating series.
The first six terms of this series are
\begin{align}
& \Im g(\tau,1) = \sqrt{\frac32} \frac{1}{\sqrt{\tau}} - \frac{1}{35} \sqrt{\frac32} \sqrt{\tau} 
+ \frac{7}{2,750 \sqrt6} \tau^{\frac32} - \frac{44,081}{656,906,250 \sqrt6} \tau^{\frac52} \\
 & + \frac{1,495,665,023}{1,039,685,521,875,000\sqrt6} \tau^{\frac72} -
 \frac{96,439,937,879}{5,734,608,285,656,250,000\sqrt6} \tau^{\frac92}+ O(\tau^{\frac{11}{2}})\,. \nonumber
\end{align}
Substituting into \eqref{HWint} and integrating over $\tau$, the alternating property is preserved.
This gives 
\begin{align}\label{HWexprho1}
\theta(1/t, t) &= \frac{\sqrt3}{2\pi t} e^{1/t} \Big(1 - \frac{1}{70} t + \frac{7}{11,000} t^2 - \frac{44,081}{1,051,050,000} t^3 \\
&+ \frac{1,495,665,023}{475,284,810,000,000} t^4 - 
\frac{96,439,937,879}{582,563,381,400,000,000} t^5 + O(t^6) \nonumber \,.
\end{align}
The truncation error of such a series at any finite order is 
bounded by the first neglected term. 

Next we prove also a result for the $\tau \to \infty$ asymptotics of $\delta(\tau,1)$.

\begin{proposition}
We have 
\begin{equation}\label{largetau}
\delta(\tau,1) = -1 + \pi \sqrt{\frac{2}{3\tau}} + O(\tau^{-3/2} \log(2\tau)) \,,\quad (\tau\to \infty)\,.
\end{equation}
\end{proposition}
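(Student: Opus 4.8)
\emph{Proof strategy.} The plan is to reduce the statement to a question about the geometry of the steepest descent path at infinity, and then to differentiate. Starting from $\delta(\tau,1) = -1 + \sqrt{\tfrac23}\,\sqrt{\tau}\,\Im g(\tau,1)$ with $g(\zeta,1)=\frac{\sinh\zeta}{\sinh\zeta-\zeta}$ and $\tau = \frac12\zeta^2 - \cosh\zeta + 1$ along the path $\zeta=\zeta(\tau)$ in the fourth quadrant, I would first record the identity obtained by differentiating the defining relation in $\tau$: since $1 = \zeta'(\tau)\,(\zeta-\sinh\zeta)$, one has $g(\tau,1) = -\sinh\zeta\,\zeta'(\tau) = -\frac{d}{d\tau}\cosh\zeta(\tau)$, hence
\[
\Im g(\tau,1) \;=\; -\,\frac{d}{d\tau}\,\Im\cosh\zeta(\tau).
\]
So it is enough to obtain the $\tau\to\infty$ expansion of $\Im\cosh\zeta(\tau)$ and differentiate it.

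\emph{Step 1: the path at infinity.} Write $\zeta(\tau) = s(\tau) + i\bigl(\epsilon(\tau)-\pi\bigr)$, so that $w:=\zeta+i\pi = s+i\epsilon$ runs out to $+\infty$ along the real axis. Then $\cosh\zeta = -\cosh s\cos\epsilon - i\sinh s\sin\epsilon$, and separating real and imaginary parts of $\tau=\frac12\zeta^2-\cosh\zeta+1$ gives the two real equations
\[
\sinh s\,\sin\epsilon = s(\pi-\epsilon),\qquad
\cosh s\,\cos\epsilon = \tau-\tfrac12 s^2+\tfrac{\epsilon^2}{2}-\pi\epsilon+\tfrac{\pi^2}{2}-1 .
\]
As $\tau\to\infty$ one shows $s\to\infty$ and $\epsilon\to 0^+$ (the alternative $\epsilon\to\pi$ is excluded since it would force $\cosh s\cos\epsilon<0$). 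The first equation then gives $\epsilon=\frac{\pi s}{\sinh s}\bigl(1+o(1)\bigr)$, and feeding this into the second gives $\cosh s = \tau-\tfrac12 s^2+O(1)$. Solving the latter by iteration (or by the implicit function theorem in the single variable $s$) yields $s=\log(2\tau)-\frac{(\log 2\tau)^2}{2\tau}+O(1/\tau)$ and $\sinh s=\tau+O\bigl((\log 2\tau)^2\bigr)$. In particular, using the first equation once more,
\[
\Im\cosh\zeta(\tau) \;=\; -\sinh s\,\sin\epsilon \;=\; -\,s(\pi-\epsilon) \;=\; -\pi\log(2\tau)+O\!\left(\frac{(\log 2\tau)^2}{\tau}\right).
\]

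\emph{Step 2: differentiate.} Differentiating $\cosh s=\tau-\tfrac12 s^2+O(1)$ gives $s'(\sinh s+s)=1+O(1/\tau)$, i.e.\ $s'=\tfrac1\tau\bigl(1+O\bigl((\log 2\tau)^2/\tau\bigr)\bigr)$, while differentiating the first relation of Step~1 shows that the contribution of $\epsilon$ to $\frac{d}{d\tau}\bigl[s(\pi-\epsilon)\bigr]$ is $O\bigl((\log 2\tau)^2/\tau^2\bigr)$. Hence
\[
\Im g(\tau,1) \;=\; -\frac{d}{d\tau}\,\Im\cosh\zeta(\tau) \;=\; \pi\,s'(\tau)+O\!\left(\frac{(\log 2\tau)^2}{\tau^2}\right) \;=\; \frac{\pi}{\tau}+O\!\left(\frac{(\log 2\tau)^2}{\tau^2}\right),
\]
and multiplying by $\sqrt{\tfrac23}\sqrt{\tau}$ yields $\delta(\tau,1)=-1+\pi\sqrt{\frac{2}{3\tau}}+O\bigl(\tau^{-3/2}\log(2\tau)\bigr)$, i.e.\ the claimed expansion (the accounting above has to be carried a little further to pin down the exact power of the logarithm in the remainder).

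The step I expect to be the main obstacle is Step~1: one must prove rigorously that the coupled transcendental system for $(s(\tau),\epsilon(\tau))$ has a unique solution on the correct branch for all sufficiently large $\tau$ and that it admits the stated asymptotic expansion with uniformly controlled remainders. Because the natural large parameter here is only $s\sim\log(2\tau)$, the error terms are merely polylogarithmically smaller than the main term, so the estimates must be done with some care. One also needs to justify that the expansion of $\Im\cosh\zeta$ in $\tau$ may be differentiated; this is cleanest by differentiating the defining equations directly, as above (alternatively, by invoking analyticity of $\zeta(\tau)$ for $\tau>0$).
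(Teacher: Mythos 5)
Your argument is correct and reaches the stated expansion by a genuinely different route than the paper. The paper inverts $\tau=\frac12\zeta^2-\cosh\zeta+1$ directly in the complex domain, writing $\zeta=\log(-2\tau)+O(\tau^{-1}\log^2(2\tau))$, substitutes into $g(\zeta,1)$ to obtain $g(\tau,1)=1-\frac{1}{\tau}\log(-2\tau)+\dots$, and takes imaginary parts only at the very end (so that $\Im\log(-2\tau)=-\pi$ produces the $\pi/\tau$ term). You instead separate real and imaginary parts from the outset, reducing the problem to real asymptotics of the single quantity $s(\tau)=\Re\zeta(\tau)$, and you exploit two exact identities --- $g=-\frac{d}{d\tau}\cosh\zeta$ and $\Im\cosh\zeta=-s(\pi-\epsilon)$, the latter being precisely the steepest-descent condition --- so that $\Im g=\frac{d}{d\tau}\bigl[s(\pi-\epsilon)\bigr]$ with no approximation made before the last step. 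This buys tighter control of the remainders and makes the differentiation step rigorous (one differentiates the exact relation $\zeta'=1/(\zeta-\sinh\zeta)$ rather than an asymptotic identity carrying an undifferentiable $O(1)$). The cost is the coupled transcendental system you flag in Step 1, but its resolution is routine: $\tau\le\frac12 s^2+\cosh s+1$ forces $s\to\infty$, then $\sin\epsilon=s(\pi-\epsilon)/\sinh s\to0$, and $\epsilon\to\pi$ is excluded exactly as you say.

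The one substantive point is the power of the logarithm in the remainder, which you rightly leave open. Carrying your bookkeeping one step further, with $\ell=\log(2\tau)$ one finds $s=\ell-\frac{\ell^2}{2\tau}+O(\ell/\tau)$ and $s\epsilon=\frac{\pi s^2}{\sinh s}\bigl(1+o(1)\bigr)=\frac{\pi\ell^2}{\tau}\bigl(1+o(1)\bigr)$, hence $s(\pi-\epsilon)=\pi\ell-\frac{3\pi\ell^2}{2\tau}+O(\ell/\tau)$ and, after differentiating,
\begin{equation*}
\Im g(\tau,1)=\frac{\pi}{\tau}+\frac{3\pi\ell^2}{2\tau^2}+O\Bigl(\frac{\ell}{\tau^2}\Bigr)\,.
\end{equation*}
So the genuine second-order correction carries $\log^2(2\tau)$, your remainder $O(\tau^{-3/2}\log^2(2\tau))$ is the correct one, and the single power of the logarithm in \eqref{largetau} appears too optimistic by one factor of $\log$: in the paper's proof the $O(\tau^{-2}\log^2(-2\tau))$ remainder of $g$ is treated as having imaginary part $O(\tau^{-2}\log(2\tau))$, which overlooks the $\log^3(-2\tau)$ contributions actually present at order $\tau^{-2}$. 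This does not affect the substance of the proposition, namely the constants $-1$ and $\pi\sqrt{2/3}$ and the conclusion that $\delta(\tau,1)\to-1$ from above.
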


\begin{proof}
Asymptotic inversion of the equation \eqref{zetaeq} gives 
\begin{equation}
\zeta = \log(-2\tau) + \frac{1}{2\tau}(\log^2(-2\tau)+3) + O(\tau^{-2})\,,\quad (\tau \to \infty)
\end{equation}
Substituting into $g(\zeta,1)$ gives
\begin{equation}
g(\tau,1) = 1 - \frac{1}{\tau} \log (-2\tau) + O(\tau^{-2} \log^2(-2\tau))
\end{equation}
Taking the imaginary part gives
\begin{equation}
\Im g(\tau,1) = \frac{\pi}{\tau} + O(\tau^{-2} \pi \log(2\tau))\,.
\end{equation}
Expressed in terms of $\delta(\tau,1)$ this yields \eqref{largetau}.

\end{proof}

This proves that $\delta(\tau,1)$ approaches $-1$ from above as $\tau \to \infty$,
which agrees with the numerical evaluation of this function in Figure \ref{Fig:delta:rho1}.

\begin{figure}
    \centering
   \includegraphics[width=2.5in]{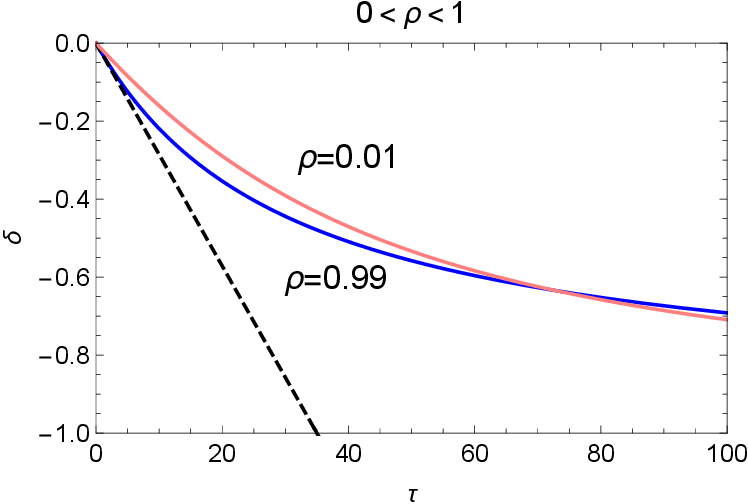}
   \includegraphics[width=2.5in]{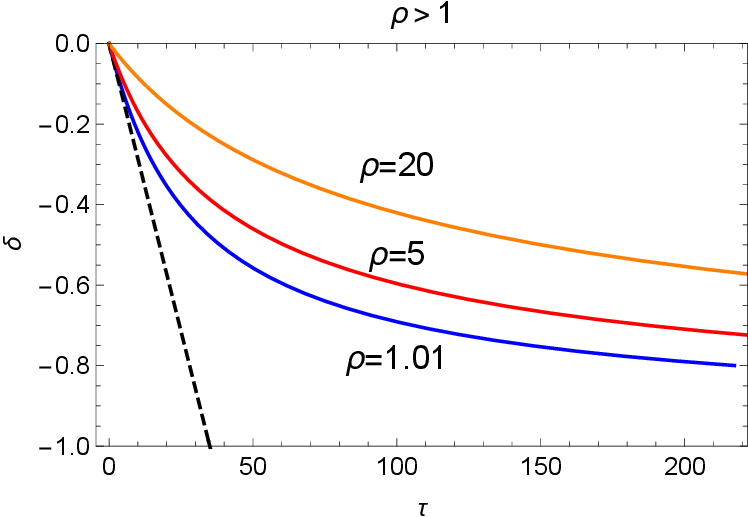}
    \caption{ Plot of $\delta(\tau)$ vs $\tau$ for several values of $\rho$.
    Left:  two extreme values of $\rho$ in the $[0,1]$ interval.
    Right: several values of $\rho$ larger than 1. The dashed line shows the bound 
    $-\frac{1}{35}\tau$.}
\label{Fig:delta}
 \end{figure}

\subsection{Bound on $\delta(\tau,\rho)$ for general $\rho$}
\label{sec:num}

We evaluated numerically the error term $\delta(\tau,\rho)$ defined in (\ref{deltadef}) for several values of $\rho$.  Figure \ref{Fig:delta} shows this function for several values of $\rho$ in $0<\rho<1$ (left) and $\rho>1$ (right). The shape of these plots is similar to that of $\delta(\tau,1)$ in Figure \ref{Fig:delta:rho1}. In particular, we note that $\delta'(0,\rho)$ is negative for all 
$\rho$ and is bounded in absolute value by $\frac{1}{35}$ for all $\rho>0$. 
This is seen more explicitly in 
Figure \ref{Fig:deltaprime} which shows the plot of $\delta'(0,\rho)$ for $\rho \leq 10$. This plot shows that $\delta'(0,\rho)$ reaches its minimum at $\rho=1$, where it takes the 
value $-\frac{1}{35}$.
 
These numerical experiments suggest the following properties of the error function $\delta(\tau,\rho)$ for general $\rho$.

i) $\lim_{\tau\to 0} \delta(\tau,\rho)=0$ for all $\rho >0$.

ii) $\delta(\tau,\rho) < 0$ is negative for all $\tau>0$

iii)  $\delta(\tau,\rho)$ is monotonically decreasing and approaches $-1$ from above as $\tau \to \infty$.

iii) $|\delta(\tau,\rho) | \leq 1$. 

iv) $\delta(\tau,\rho)$ is bounded in absolute value for all $\rho>0$ as 
\begin{equation}\label{dbound}
|\delta(\tau, \rho)| \leq \frac{1}{35} \tau\,,\quad \tau \geq 0\,.
\end{equation}

\begin{figure}
    \centering
   \includegraphics[width=3.5in]{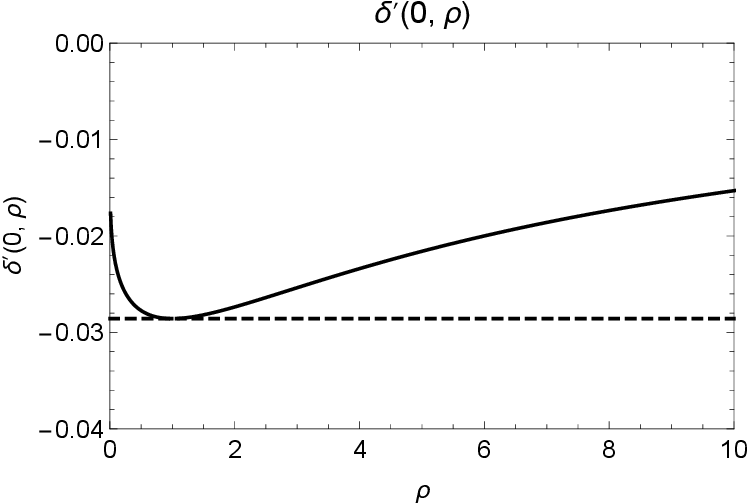}
    \caption{ Plot of $\delta'(0, \rho)$ vs $\rho$. The dashed line is at $-\frac{1}{35}$ and corresponds to the minimum value of $\delta'(0,\rho)$ which is reached at $\rho=1$.}
\label{Fig:deltaprime}
 \end{figure}

As shown in Proposition \ref{prop:main}, the property (iv) yields the bound \eqref{varthetabound} on $|\vartheta(t,\rho)|$. 

\begin{remark}\label{rmk:strongerbound}
Combining the properties  (iii) and (iv) give the stronger inequality $|\delta(\tau,\rho)|
\leq \min(\frac{1}{35} \tau,1 )$, which leads to the stronger error bound
$|\vartheta(t,\rho)| \leq \vartheta_{max}(t)$
with
\begin{equation}
\vartheta_{max}(t) = \frac{1}{70} t - \sqrt{\frac{35}{t}} Ei_{-1/2}\Big(\frac{35}{t}\Big) 
+ \mbox{Erfc}\Big(\sqrt{\frac{35}{t}}\Big) \,.
\end{equation}
Here $Ei_\alpha(z) = \int_1^\infty e^{-zt} t^{-\alpha} dt$ is the exponential integral function.
For sufficiently small $t<10$, $\vartheta_{max}(t)$ is well approximated by $\frac{1}{70}t$, 
which recovers the simpler bound \eqref{varthetabound}. For larger $t$ it remains finite and approaches 1 as $t\to \infty$.
\end{remark}


\end{document}